\newtheorem{theorem}{Theorem}[section]
\newtheorem{lemma}[theorem]{Lemma}
\newtheorem{proposition}[theorem]{Proposition}
\newtheorem{corollary}[theorem]{Corollary}
\theoremstyle{definition}
\theoremstyle{definitions}
\newtheorem{definition}[theorem]{Definition}
\newtheorem{conjecture}[theorem]{Conjecture}
\theoremstyle{notations}
\newtheorem{example}[theorem]{Example}
\theoremstyle{remarks}
\newcommand{\T}{\mathrm}
\newcommand{\GraphNodeDistance}{0.3 cm}
\newcommand{\GraphNodeSize}{2 pt}
\newcommand{\GraphInnerSep}{1 pt}
\newcommand{\GraphLineWidth}{0.6 pt}
\begin{document}
\author[M. Alinejad and S. fulad]
{M.~Alinejad$^{1,*}$ and S.~fulad$^{1}$}

\title[Equitable partitions for Ramanujan graphs]
{Equitable partitions for Ramanujan graphs}
\subjclass[2000]{05C50, 05C31}
\keywords{Edge signing, Good signing, Adjacency matrix, Lexicographic product}
\thanks{$^*$Corresponding author}
\thanks{E-mail addresses:, mohsenalinejad96@gmail.com, sanazfulad@gmail.com}
\maketitle

\begin{center}
{
\it $^1$Department of Mathematics, School of Science, Shiraz University, \\
P.O.Box 7146713565, Shiraz, Iran.
}
\end{center}
\vspace{0.4cm}
\begin{abstract}
For $d$-regular graph $ G ,$ 	
an edge-signing $ \sigma:E(G) 
\rightarrow
\lbrace -1,1 \rbrace$ is called a good signing if the absolute eigenvalues of adjacency matrix are at most 
$ 2 \sqrt{d-1}.$ Bilu-Linial conjectured that for each regular graph there exists a good signing. In this paper, by using new concept "Equitable Partition", we solve the conjecture \ref{L1} for some cases. We show that how to find out a good signing for special complete graphs and lexicographic product of two graphs. In particular, if there exist two good signings for graph $ G, $ then we can find a good signing for a $ 2$-lift of $ G.$
\end{abstract}
\vspace{0.5cm}

\section{Introduction}
For any graph $G$, we denote the set of all vertices and edges of $G$ by $V(G)$ and $E(G)$, respectively. 
For two vertices $u, v \in V(G),$ we denote $ u \sim v$ or $ uv $ for brevity, if $u$ and $v$ are adjacent.
The \textit{degree} of a vertex $v\in V(G)$, denoted by $\text{d}(v)$, 
is the number of adjacent vertices of $v$. 
The maximum and minimum degree of graph $ G $ is denoted by $ \Delta(G) $ and $ \delta(G),$ respectively. An \textit{edge-signing} is an 
edge-weighted graph obtained by signing $ -1$  and $ 1 $ to each edge of 
$ G .$ In this case, the sign of the edge $ uv $ is denoted by $ e(uv) .$ 
For the specific sign $ \sigma: 
E(G) 
\rightarrow
 \lbrace -1,1 \rbrace,$ we denote $ G^{\sigma} $ and $ A^{\sigma} $ for 
 the edge-signed graph and associated signed adjacency matrix, respectively. In the latter case,
  $A^{\sigma} = [a^{\sigma}_{ij} ]$ of $G^{\sigma}$ is defined as
$a^{\sigma}_{ij} = \sigma (ij),$ if $ij \in E(G)$, and $a_{ij} = 0,$ if $ij \notin
E(G).$ The \textit{spectrum} of a graph $G^{\sigma},$ denoted by $\lbrace \lambda_i \rbrace (1 \leq i \leq n, n=|V(G)| )$ is the multiset of the eigenvalues of $ A^{\sigma} .$ In addition, define 
$ \rho(G^{\sigma}) $ as the maximum value of 
$\lbrace |\lambda_i| \rbrace ,$ where 
$1 \leq i \leq n .$
A graph is called \textit{Ramanajun} if all its nontrivial eigenvalues lie in $ [-2\sqrt{d-1}, 2\sqrt{d-1}].$
Bilu and Linial proposed the the following conjecture in \cite{bilu}.
\begin{conjecture}{(Bilu-Linial)} \label{L1}
	Let $G$ be a $d$-regular graph with $d>1.$ Then $\rho(G^{\sigma})\leq 2 \sqrt {d-1},$ for some edge-signing 
	$\sigma $ of $G.$
\end{conjecture}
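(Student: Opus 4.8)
The plan is to attack Conjecture \ref{L1} through the signed-lift correspondence that originally motivated it, rather than head-on. Recall that an edge-signing $\sigma$ of $G$ encodes a $2$-lift $\widehat{G}$ whose spectrum is exactly the union of the spectrum of $A$ and the spectrum of $A^\sigma$. When $G$ is $d$-regular and Ramanujan, $A$ already keeps its nontrivial eigenvalues inside $[-2\sqrt{d-1},2\sqrt{d-1}]$, so a good signing is precisely the device needed to hold the \emph{new} eigenvalues of the lift inside the same band. First I would fix a convenient base case---the special complete graphs the paper isolates---and verify directly that a good signing exists by choosing $\sigma$ so that $G^\sigma$ carries an explicit equitable partition; the associated quotient (divisor) matrix $B$ then has small order, its characteristic polynomial divides that of $A^\sigma$, and the remaining eigenvalues live on the orthogonal complement of the partition's indicator vectors. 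Checking $|\lambda|\le 2\sqrt{d-1}$ on each piece separately makes the base case a finite, explicit computation.

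For a fully general existence statement, the heart of the matter is producing a signing with $\rho(G^\sigma)=\max_i|\lambda_i|\le 2\sqrt{d-1}$, and the most promising route is the method of interlacing families of characteristic polynomials. One forms the expected polynomial $\mathbb{E}_\sigma \det(xI - A^\sigma)$ over uniformly random $\pm 1$ signings and shows that $\{\det(xI-A^\sigma)\}_\sigma$ is an interlacing family, so that some particular $\sigma$ has largest root at most the largest root of the expectation. The expected polynomial is, up to normalization, the matching polynomial of $G$, all of whose roots lie in $[-2\sqrt{d-1},2\sqrt{d-1}]$ by the Heilmann--Lieb theorem. This yields one signing whose \emph{largest} eigenvalue is at most $2\sqrt{d-1}$.

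The main obstacle---and the reason the conjecture is open in full generality---is that this argument controls only the top of the spectrum, not $\rho(G^\sigma)$. The matching polynomial is symmetric about $0$, but interlacing families transfer only the one-sided bound to an individual member, so the smallest eigenvalue of the chosen $A^\sigma$ need not be bounded below by $-2\sqrt{d-1}$. Simultaneously pinning both ends of the spectrum is exactly the two-sided difficulty that the interlacing technology does not resolve, and it is where I expect the genuine work to lie.

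Accordingly, for the cases the paper can actually reach, my plan is to sidestep the two-sided obstacle by making the spectrum explicitly computable through equitable partitions rather than probabilistically. For a lexicographic product $G[H]$, I would exploit the block structure of its adjacency matrix, where each copy of $H$ sits over a vertex of $G$, and assemble a signing on the product from good signings on the factors; the product should again admit an equitable partition whose small quotient, together with the factor spectra on the complementary subspaces, can be verified to satisfy $|\lambda|\le 2\sqrt{d-1}$ directly. I would carry out the complete-graph and lexicographic-product cases in this explicit way, and then use them as seeds for the $2$-lift step, in which two good signings of $G$ are combined to sign a $2$-lift and the quotient-matrix analysis is repeated on the larger graph.
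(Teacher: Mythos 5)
The statement you were asked to prove is the Bilu--Linial conjecture itself; the paper does not prove it in general but only establishes special cases (complete graphs $K_{n+1},K_{n+2},K_{n+3}$ when a conference matrix of order $n$ exists, lexicographic products $G \circ \overline{K_2}$ and $G \circ \overline{K_4}$, and $2$-lifts assembled from two good signings), all via equitable partitions and the quotient matrix $B$. Your proposal is correct on both counts --- it rightly identifies why the general conjecture stays open (interlacing families control only the largest root, not $\rho(G^\sigma)$) --- and for the tractable cases your plan (explicit signings admitting an equitable partition, then checking eigenvalues separately on the column space of $P$ and on vectors with $P^T X = \mathbf{0}$) is essentially the same approach the paper takes.
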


By omitting the regularity of the above conjecture, we have a stronger conjecture as follow.  
\begin{conjecture} \cite[Conjecture $2$]{greg}  \label{L2}
	For a graph $ G $ with $ \Delta(G)>1 ,$ there exists an edge  signing $ \sigma $ for $ G $ such that $$ \rho(G^{\sigma}) \leq 2 \sqrt {\Delta(G)-1}.$$	
\end{conjecture}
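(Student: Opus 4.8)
The plan is to attack Conjecture \ref{L2} through the method of interlacing families of characteristic polynomials, adapting the strategy used to control the spectra of $2$-lifts. The point of departure is to compute the expected characteristic polynomial of a uniformly random signing. If each $\sigma(uv)$ is chosen independently and uniformly from $\lbrace -1,1 \rbrace$, then in the permutation expansion of $\det(xI - A^\sigma)$ every term that uses some edge an odd number of times has expectation zero, so the only surviving contributions come from permutations whose support is a disjoint union of transpositions along edges of $G$, that is, from the matchings of $G$. Hence
\[
\mathbb{E}_\sigma\big[\det(xI - A^\sigma)\big] = \mu_G(x),
\]
where $\mu_G(x) = \sum_k (-1)^k m_k(G)\, x^{n-2k}$ is the matching polynomial and $m_k(G)$ counts the $k$-matchings of $G$.

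The second step is to invoke the Heilmann--Lieb (equivalently Godsil) theorem that $\mu_G$ is real-rooted and that all of its roots lie in $[-2\sqrt{\Delta(G)-1},\, 2\sqrt{\Delta(G)-1}]$, so the \emph{average} of the signed characteristic polynomials already has its spectrum confined to the target window. The third step is to show that the family $\lbrace \det(xI - A^\sigma) \rbrace_\sigma$, indexed by the $2^{|E(G)|}$ signings and generated edge by edge, forms an interlacing family: conditioning on the signs of some edges and averaging over one further edge yields a convex combination of two polynomials with a common interlacer, so real-rootedness and the root bound propagate down the binary tree of partial signings. The interlacing-family principle then produces a \emph{specific} signing $\sigma^\star$ whose largest root is at most that of $\mu_G$, hence at most $2\sqrt{\Delta(G)-1}$. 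Here the structural tools the paper develops --- equitable partitions, the behavior of good signings under the lexicographic product, and the explicit good signings of $2$-lifts --- would serve to certify real-rootedness of the intermediate averages and to reduce decomposable graphs to smaller quotient matrices on which the relevant polynomials can be read off directly.

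The main obstacle I anticipate is that interlacing families control only \emph{one} end of the spectrum, delivering $\lambda_{\max}(A^{\sigma^\star}) \le 2\sqrt{\Delta(G)-1}$, whereas $\rho(G^{\sigma^\star})$ also demands $\lambda_{\min}(A^{\sigma^\star}) \ge -2\sqrt{\Delta(G)-1}$ for the \emph{same} signing. When $G$ is bipartite the spectrum of $A^\sigma$ is symmetric about $0$ and a one-sided bound suffices, which is precisely why the lift constructions succeed; for general non-bipartite $G$ no such symmetry exists, and running the argument on $-A^\sigma$ merely relocates the difficulty to a possibly different signing. Forcing a single signing to pinch both extreme eigenvalues --- presumably by enlarging the interlacing family, or by passing to the bipartite double cover and transferring the estimate back through an equitable partition --- is where the real work lies, and it is the reason this strengthening of the Bilu--Linial conjecture remains open in full generality.
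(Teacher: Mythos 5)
You were asked to prove a statement that the paper itself never proves: Conjecture~\ref{L2} is quoted from Gregory as an \emph{open} conjecture, and the paper only establishes special cases of the weaker Conjecture~\ref{L1} (complete graphs of order near a conference matrix, certain lexicographic products $G \circ \overline{K_2}$ and $G \circ \overline{K_4}$, and $2$-lifts built from two inequivalent good signings) via the equitable-partition machinery. So there is no proof in the paper to compare against, and any complete argument you produced would be a substantial new result rather than a reconstruction.

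Your proposal is not such an argument, and to your credit you say so in your final paragraph. Steps one through three are a correct recounting of the Marcus--Spielman--Srivastava interlacing-families method: the Godsil--Gutman identity $\mathbb{E}_\sigma\left[\det(xI - A^\sigma)\right] = \mu_G(x)$, the Heilmann--Lieb bound placing the roots of $\mu_G$ in $[-2\sqrt{\Delta(G)-1},\,2\sqrt{\Delta(G)-1}]$, and the interlacing-family theorem yielding a signing $\sigma^\star$ with $\lambda_{\max}(A^{\sigma^\star}) \leq 2\sqrt{\Delta(G)-1}$. But that is a one-sided bound, while $\rho(G^{\sigma^\star}) \leq 2\sqrt{\Delta(G)-1}$ requires pinching $\lambda_{\max}$ and $\lambda_{\min}$ simultaneously for a \emph{single} signing; the interlacing machinery gives no handle on $\lambda_{\min}$, and running it on $-A^\sigma$ selects a possibly different signing. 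For bipartite $G$ the spectrum of every $A^\sigma$ is symmetric about zero, so the one-sided bound suffices --- that is exactly the Marcus--Spielman--Srivastava theorem the paper already cites --- but for non-bipartite $G$ this two-sided control is the entire content of the conjecture. Neither of your proposed repairs closes it: ``enlarging the interlacing family'' is an unexecuted hope, and the bipartite double cover does not help because a signing of the double cover is not a signing of $G$ and there is no equitable-partition argument that folds a good signing of a graph on $2|V(G)|$ vertices back down to one on $G$. I would also flag your suggestion that the paper's equitable partitions could ``certify real-rootedness of the intermediate averages'': real-rootedness in the interlacing-families proof comes from the theory of real stable polynomials and mixed characteristic polynomials, and nothing in the paper's quotient-matrix formalism (which computes spectra of specific signed graphs, not averages over signings) bears on that question. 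In short, the part of your argument that works is known and only one-sided; the missing half is precisely why the conjecture is still open.
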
\label{R2}

For the first time Lubotzky and et. al \cite{lubo} and Margulis \cite{marg} found the infinite families of Ramanajun graphs.
Adam and et. al proved the Conjecture \ref{L1} for bipartite graphs. In addition, they showed that there are infinite families of $ (c,d)$-biregular bipartite graphs such the second largest eigenvalue is less than 
$ \sqrt{c-1}+\sqrt{d-1}$ (\cite{marc}). For more information about the construction of Ramanajun graphs see \cite{burn,chiu,hyun,morg,jord,pize}.

A \textit{2-lift} is a process on a graph which create a new graph with twice vertices as follow.
For a graph $ G$ define graph 
$ G' $ with $ V(G') $ and $ E(G').$ 
For each $ u \in V(G),$ we have two vertices $ u_0$  and $ u_1$ in $ V(G').$ 
If $ uv \in V(G),$ then $ E(G')$ contains one of the following cases:
$$ \lbrace u_0v_0, u_1v_1 \rbrace $$
or
$$ \lbrace u_0v_1, u_1v_0\rbrace .$$
Hence, for a graph $ G,$ we can construct $ 2^{|E(G)|} $ new graphs by using $ 2$-lift of $ G.$
First time, Bilu-Linial used the $ 2$-lift method to construct infinite families of expanders.
Friedmen was the first one who introduced old and new eigenvalues for $ 2$-lift of graphs \cite{frie}. To achieve more information about $ 2$-lift of graphs see \cite{amit, lela, lube}.

A \textit{conference} matrix, denoted by $C(n),$ 
is a weighing matrix of order $n$ and its entries on 
diagonal are zero and off the diagonal are $ 1 $ and $ -1 $ such that 
$ C(n){C(n)}^{T} = I.$ 
 It is known that a symmetric conference matrix exists if $n=q+1,$ where $q$ is a 
prime power with $q \equiv 1$ (mod $4$). 
A symmetric conference matrix which entries on the first row are non negative is called normalized symmetric conference matrix.   
A matrix $H_{n-1}$ is defined by removing the first row 
and column of normalized symmetric conference matrix $C(n).$ For each conference matrix of order $n,$ there is 
a signing $\sigma $ such that $\rho(C(n)) \leq \sqrt {n-1}.$ 

Let $G$ and $H$ be two graphs. Then \textit{lexicographic} product of $G$ and $H$ which
is denoted by $G \circ H$ is a graph with vertex set $V (G) \times V (H)$ and two vertices
$(x, y)$ and $(z, t)$ are adjacent if and only if either $x$ is adjacent to $z$ in G or $x = z$
and $y$ is adjacent to $t$ in $H.$ A \textit{decomposition} of graph 
$ G $ is a set of edge-disjoint subgraphs of $ G$ such that each edge of $ G $ belongs to exactly one subgraph.

\section{Preliminaries} 

There exists a relation between the spectrum of $ G'$ and the spectrum of $G$ and 
$ A^{\sigma} $ as follow.
\begin{proposition} \cite[Lemma 3.1]{bilu}
	Spec($ G^{'}) =$ Spec($G$) $ \cup $ Spec($ A^{\sigma} $)
\end{proposition}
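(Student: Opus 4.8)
The plan is to realize the adjacency matrix of the $2$-lift $G'$ as a $2\times 2$ block matrix indexed by the two ``copies'' of each vertex, and then block-diagonalize it by an explicit orthogonal change of basis. First I would fix the ordering of $V(G')$ so that the first $n$ vertices are $u_0$ (as $u$ ranges over $V(G)$) and the last $n$ are $u_1$. The combinatorial data of the $2$-lift is exactly a signing $\sigma\colon E(G)\to\{-1,1\}$: declare $\sigma(uv)=1$ when the edge lifts to the parallel pair $\{u_0v_0,u_1v_1\}$ and $\sigma(uv)=-1$ when it lifts to the crossing pair $\{u_0v_1,u_1v_0\}$.

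With this ordering the adjacency matrix of $G'$ takes the form
\[
A(G')=\begin{pmatrix} A_1 & A_2 \\ A_2 & A_1 \end{pmatrix},
\]
where $A_1$ records the parallel edges (those with $\sigma=1$) and $A_2$ records the crossing edges (those with $\sigma=-1$). The two identities that drive everything are $A_1+A_2=A(G)$, the ordinary adjacency matrix of $G$, and $A_1-A_2=A^{\sigma}$, the signed adjacency matrix. Each unsigned edge $uv$ contributes a $1$ to exactly one of $A_1,A_2$, so the sum recovers $A(G)$; the same edge contributes $\sigma(uv)$ to the difference, which is the defining rule of $A^{\sigma}$.

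Next I would apply the orthogonal similarity given by $P=\tfrac{1}{\sqrt 2}\begin{pmatrix} I & I\\ I & -I\end{pmatrix}$, which satisfies $P=P^{T}=P^{-1}$. A direct computation gives
\[
P\,A(G')\,P=\begin{pmatrix} A_1+A_2 & 0\\ 0 & A_1-A_2\end{pmatrix}=\begin{pmatrix} A(G) & 0\\ 0 & A^{\sigma}\end{pmatrix}.
\]
Since $P$ is orthogonal, $A(G')$ is similar to this block-diagonal matrix, so its multiset of eigenvalues is the disjoint union of the eigenvalues of $A(G)$ and those of $A^{\sigma}$; equivalently, symmetric vectors $(x,x)^{T}$ are carried to eigenvectors governed by $A(G)$ and antisymmetric vectors $(x,-x)^{T}$ to eigenvectors governed by $A^{\sigma}$. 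This yields $\mathrm{Spec}(G')=\mathrm{Spec}(G)\cup\mathrm{Spec}(A^{\sigma})$.

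The linear algebra in the final step is routine once the blocks are in place; the only point demanding genuine care is the translation in the second step, namely verifying that the parallel/crossing dichotomy of the lift produces precisely the clean splitting $A(G)=A_1+A_2$ and $A^{\sigma}=A_1-A_2$. Getting the bookkeeping of which lifted edges land in $A_1$ versus $A_2$ exactly right, so that the signed matrix emerges as the difference, is where I expect the main (if modest) obstacle to lie.
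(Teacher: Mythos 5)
Your proof is correct, and the linear algebra checks out: with the parallel/crossing ordering the lift's adjacency matrix is indeed $\begin{pmatrix} A_1 & A_2 \\ A_2 & A_1 \end{pmatrix}$, and conjugation by $\tfrac{1}{\sqrt 2}\begin{pmatrix} I & I \\ I & -I \end{pmatrix}$ block-diagonalizes it into $A(G)\oplus A^{\sigma}$. Note that the paper itself gives no proof of this proposition (it is quoted from Bilu--Linial), and your argument is essentially the standard one from that source, where it is phrased via the equivalent observation that symmetric vectors $(x,x)^{T}$ lift eigenvectors of $A(G)$ and antisymmetric vectors $(x,-x)^{T}$ lift eigenvectors of $A^{\sigma}$; your orthogonal-similarity packaging and their eigenvector-lifting packaging are the same decomposition.
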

\begin{definition}
Let $G$ be a graph and $\sigma $ be an edge-signing of $G.$ Then for vertex $u$ of $G,$ define 
$$N^{+}(u)= \lbrace v | \sigma(uv)=1 \rbrace,$$

$$ N^{-}(u)= N(u) \setminus N^{+}(u) .$$
Moreover, for each subset $S \subseteq V(G),$ define 
$$\T{d}(u,S)= |N^{+}(u) \cap S|-|N^{-}(u) \cap S|.$$
\end{definition}

\begin{definition}
Let $G$ be a graph graph and $\sigma $ be an signing of it. The partitions $C_1,\cdots, C_k$ of $V(G)$ is called 
\textit{equitable partition}, if for each two integers $1 \leq i,j \leq k$ and $u \in C_i,$ the value
$\T{d}(u,C_j)$ depends only on $C_i$ and $C_j.$
\end{definition}
\begin{definition}
Given an equitable partition $ \pi$ with cells $C_1, \cdots, C_k.$ The \textit{characteristic } matrix $P$ is a 
$|V(G) |\times k$ matrix with $P_{ij}=1$ if vertex $i$ belongs to $C_j$ and $P_{ij}=0,$ otherwise.
Given an equitable partition $\pi .$ Then define matrix $B$ such that $B_{ij}=\T{d}(u, C_j),$ where $u \in C_i.$
\end{definition}
Notice that the entries of $B$ can be negative. The following proposition is similar to
the \cite[Lemma 9.3.1]{godsil}. Thus, we skip the proof.

\begin{proposition} \label{a1}
Let $\pi$ be an equitable partition for $G^{\sigma}$. Then $A^{\sigma}P=PB.$
\end{proposition}

By Proposition \ref{a1}, we have ${(A^{\sigma})}^{r}P=PB^{r},$ where $ r$ is a positive number. Since the entries of $B$ can be negative, it is possible the largest eigenvalue of $B$ and $A^{\sigma}$ be different.
If $X$ is an eigenvector corresponded to eigenvalue $\lambda $ of matrix $A^{sigma},$ then $A^{\sigma}X=\lambda X.$ 
By proposition \ref{a1}, we deduce that $B^{T}(P^TX)= \lambda (P^TX).$ Thus, $\lambda$ is an eigenvalue of $B$ if and only if $P^TX \neq 0.$ To find the spectrum $ G^{\sigma} ,$ we first characterize the spectrum of $ B .$ After that compute $ P^{T}X,$ where $ X $ is eigenvector correspond to $ \lambda .$ If $ P^{T}X $ is nonzero, then $ \lambda $ is eigenvalue of $ A^{\sigma} .$ If $P^{T}X $ is zero, we try to find a relation between entries of $ X $ and 
the eigenvalue $ \lambda .$

\section{Good signing for some complete graphs}
By an equitable partition 
we show that if there is a conference matrix of order 
$n,$ 
then there exists a good signing for complete graph $K_m,$ where $ m \in \lbrace n+1, n+2,n+3 \rbrace.$
\begin{lemma} \label{R1}
Let $C(n)$ be a normalize symmetric conference matrix. Then there is a good signing for $K_m,$ where 
$ m \in \lbrace n+1, n+2,n+3 \rbrace.$
\end{lemma}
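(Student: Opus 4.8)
The plan is to realise the conference matrix $C(n)$ itself as the signed adjacency matrix of $K_n$ and then to enlarge it by one, two, or three vertices while keeping an explicit equitable partition under control. Writing the normalized symmetric conference matrix in block form
$$C(n)=\begin{pmatrix} 0 & \mathbf{1}^{T}\\ \mathbf{1} & H_{n-1}\end{pmatrix},$$
the identity $C(n)C(n)^{T}=(n-1)I$ forces $H_{n-1}\mathbf{1}=0$ and $H_{n-1}^{2}=(n-1)I-J$, where $J$ is the all-ones matrix; hence the spectrum of $H_{n-1}$ consists of $0$ (on the line spanned by $\mathbf{1}$) together with $\pm\sqrt{n-1}$ (on $\mathbf{1}^{\perp}$). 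In particular the vertex $0$ indexed by the first row has $\T{d}(0,\{1,\dots,n-1\})=n-1$, while every other vertex $u$ of $C(n)$ satisfies $\T{d}(u,\{1,\dots,n-1\})=0$ and $\T{d}(u,\{0\})=1$. This is precisely a two-cell equitable partition $\{0\}\mid\{1,\dots,n-1\}$ of $K_n^{\sigma}$, and it is the seed for all three cases.

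For $m\in\{n+1,n+2,n+3\}$ I would adjoin $m-n$ new vertices and join each of them to the block $\{1,\dots,n-1\}$ by edges all signed $+1$; the only edges left to choose are those to the special vertex $0$ and those among the new vertices, and I would sign the new vertices symmetrically so that they may be collapsed into a single cell. The resulting partition — the cell $\{0\}$, the cell of new vertices, and the large cell $\{1,\dots,n-1\}$ — is equitable: for $u$ in the large cell one checks that $\T{d}(u,\cdot)$ is constant on each cell, taking the values $1$, $m-n$, and $0$, and the remaining rows are handled by the symmetry among the new vertices. By Proposition \ref{a1} the spectrum of the signed $K_m$ then splits into two transparent pieces.

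The first piece is the eigenvalues of the small quotient matrix $B$ (of size $3$, together with the antisymmetric combinations inside the new cell, which contribute only the common sign $-c$ among the new edges). I would write $B$ out explicitly and diagonalise it by hand; already for $m=n+1$ one finds its dominant eigenvalue equal to $\tfrac12\bigl(1+\sqrt{8n-7}\bigr)$, of order $\sqrt{n}$, comfortably below the Ramanujan threshold $2\sqrt{m-2}$. The second piece comes from vectors supported on $\{1,\dots,n-1\}$ and orthogonal to $\mathbf{1}$: because every special and every new vertex meets this block with the constant sign $+1$, such a vector lies in $\ker P^{T}$ and is acted on simply by $H_{n-1}$, so it contributes only the eigenvalues $\pm\sqrt{n-1}$. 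Since $\sqrt{n-1}\le 2\sqrt{m-2}$ for all three values of $m$, this piece is automatically good, and a dimension count ($3+(n-2)=n+1$, and analogously for $n+2,n+3$) confirms that these two pieces exhaust the spectrum.

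The equitability check and the spectral splitting are routine; the real work — and the only place where the three cases genuinely differ — is choosing the signs on the edges to vertex $0$ and among the new vertices so as to minimise $\rho(B)$, and then verifying $\rho(B)\le 2\sqrt{m-2}$ for every admissible order $n$. I expect this last numerical verification, in particular the smallest Paley case $n=6$ where the bounds are tightest, rather than any structural step, to be the main obstacle.
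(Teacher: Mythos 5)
Your framework is essentially the paper's own: the block form of $C(n)$, the identities $H_{n-1}\mathbf{1}=0$ and $H_{n-1}^{2}=(n-1)I-J$, an equitable partition whose large cell carries $H_{n-1}$, and the dichotomy between $P^{T}X\neq 0$ (eigenvalues of the quotient matrix $B$) and $P^{T}X=0$ (eigenvalues $\pm\sqrt{n-1}$ of $H_{n-1}$, plus the antisymmetric vectors inside the new cell). Your case $m=n+1$ is complete and correct; in fact your dominant eigenvalue $\frac{1}{2}\bigl(1+\sqrt{8n-7}\bigr)$ is the right value (the paper's stated $\sqrt{8n-15}$ is a computational slip), and it is at most $2\sqrt{n-1}$ precisely because $n-1\geq\sqrt{n-1}$.

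The genuine gap is that for $m=n+2$ and $m=n+3$ you never fix the signs on the edges meeting vertex $0$ and among the new vertices, and never prove $\rho(B)\leq 2\sqrt{m-2}$; you explicitly defer this as ``the main obstacle.'' That deferred step is the entire content of those two cases, and it is not a routine numerical check: the most natural choice, all such edges signed $+1$, actually fails. For $m=n+2$ it gives top eigenvalue $1+\sqrt{3n-2}$, and $1+\sqrt{3n-2}\leq 2\sqrt{n}$ is equivalent to $(n-1)(n-9)\geq 0$, which is false at the smallest nontrivial conference order $n=6$ (there $1+\sqrt{16}=5>2\sqrt{6}$); this is exactly the configuration the paper uses in its Case 2, so the published proof, and its $K_8$ example $A^{\sigma_2}$ (which has eigenvalue $5$ with eigenvector constant $\tfrac{5}{3}$ on the three special vertices and $1$ elsewhere), are themselves wrong at $n=6$. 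For $m=n+3$ the all-plus choice fails for every $n$: the quotient polynomial $\lambda^{3}-2\lambda^{2}-(4n-1)\lambda-4(n-1)$ is negative at $\lambda=2\sqrt{n+1}$, so its largest root always exceeds the threshold. Workable choices do exist within your scheme --- e.g.\ signing the edges among the new vertices $-1$ and those to vertex $0$ by $+1$ gives, for $m=n+2$, the polynomial $\lambda^{3}+\lambda^{2}-(3n-1)\lambda-5(n-1)$ and, for $m=n+3$, the polynomial $\lambda^{3}+2\lambda^{2}-(4n-1)\lambda-8(n-1)$, both of whose roots can be shown to lie strictly inside the required interval for all $n\geq 2$; alternatively, the paper's Case 3 uses two cells $\{u_1,v_1\},\{u_2,v_2\}$ with mixed signs, yielding eigenvalues $0,\pm\sqrt{4n-3}\leq 2\sqrt{n+1}$. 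But until such a choice is exhibited and the bound proved for every admissible $n$, two of the three cases of the lemma remain unproven, and your own expectation that this is where the difficulty lies is exactly right.
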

\begin{proof}
Since we have three choices for $ m ,$ we consider three cases.\\
\textbf{Case 1.} Suppose that $m=n+1.$ Define an equitable partition with three cells such that $|C_1|=1, |C_2|=1$ and 
$|C_3|=n-1.$ Moreover, 
$A_{C_3}=H_{n-1}$ and for each $u \in C_3,$ define $d(u,C_i)=1,$ where $i= 1,2.$ Moreover, the sign of the edge between $ C_1 $ and $ C_2 $ is one. In this sense, 
$$
B= 
\left[ {\begin{array}{*{20}{c}}
{0}&{1}&{n-1}\\
{1}&{0}&{n-1}\\
{1}&{1}&{0}
\end{array}} \right]
$$
Suppose that $\lambda $ is an eigenvalue of $A^{\sigma}$ and $A^{\sigma}X= \lambda X.$ If $P^TX \neq \textbf{0},$ 
then $\lambda$ is an eigenvalue of $B.$ Since the eigenvalues of 
$B$ are $\frac{1}{2} (1- \sqrt{ 8n-15}), -1, \frac{1}{2} (\sqrt{ 8n-15}+1)
,$ we can see $\lambda \leq 2 \sqrt {n-1}.$ If $P^TX = \textbf{0},$ then $x_1=x_2= 0$ and 
$\lambda $ is an eigenvalue of $A_{C_3}.$ From the fact that the largest eigenvalue of $A_{C_3}$ is 
$\sqrt{n-1},$ we get $ \sigma $ is a good signing for $ K_{n+1}.$\\ 
\textbf{Case 2.} 
Assume that $m=n+2.$ Consider an equitable partition by four cells such that 
$|C_1|=|C_2|=|C_3|=1$ and $|C_4|=n-1.$ Put $A_{C_4}= H_{n-1}$ and $d(u,C_i)=1,$ for each 
$u \in C_4$ and $1 \leq i \leq 3.$ In addition, assume that the sign of the edges between $ C_1 ,$ $ C_2 $ and $ C_3 $ is $ 1 .$  Hence, 
$$
B= 
\left[ {\begin{array}{*{20}{c}}
{0}&{1}&{1}&{n-1}\\
{1}&{0}&{1}&{n-1}\\
{1}&{1}&{0}&{n-1} \\
{1}&{1}&{1}&{0}
\end{array}} \right]
$$
Assume that 
$\lambda $ is an eigenvalue of $A^{\sigma}$ and $A^{\sigma}X= \lambda X.$ If $P^TX \neq \textbf{0},$ then $ \lambda $ is an eigenvalue of $ B $. Since the eigenvalues of $B$ are $- \sqrt{3n-2}+1, -1, -1, \sqrt{3n-2}+1,$ 
we deduce that $ \lambda $ is less than $2 \sqrt{n}.$ Otherwise, we have 
$ x_1=x_2=x_3=0 $ and $ \lambda $ is an eigenvalue of 
$ H_{n-1} $ which is less than 
$2 \sqrt{n}.$ \\
\textbf{Case 3.}
Assume that
$m= n+3.$ Consider an equitable partition with three cells such that $ 
C_1 = \lbrace u_1, v_1\rbrace, C_2=\lbrace u_2,v_2 \rbrace$ and $|C_3|=n-1.$ 
Now, define a signing $\sigma$ on $E(K_m)$ such that $A_{C_{4}}= H_{n-1},$ 
$\sigma(u_1v_1 )= -\sigma(u_2v_2 )=1, \sigma(u_1u_2 )= 
\sigma(v_1v_2)=  -\sigma(u_1v_2 )= 
-\sigma(v_1u_2)=1$ and $d(u, C_i)=2,$ for each 
$u \in C_3$ and $i=1,2.$ In this case,  
 $$ B= 
\left[ {\begin{array}{*{20}{c}}
{-1}&{0}&{n-1}\\
{0}&{1}&{n-1}\\
{2}&{2}&{0}
\end{array}} \right]
$$
Let $\lambda $ be an eigenvalue of $K^{\sigma}_m$ with $A^{\sigma}X= \lambda X.$ If $P^T X \neq 0,$ then $\lambda$ 
is an eigenvalue of $B.$ Since the eigenvalues of $B$ are $- \sqrt{4n-3}, 0, \sqrt{4n-3},$ we have 
$|\lambda| \leq 2 \sqrt{n+1}.$ If $P^T X = 0,$ then $2x_2=(\lambda -1)x_1$ and $2x_1= (\lambda-1)x_2.$
If $x_1=0,$ then $\lambda $ is an eigenvalue of $A_{C_4}.$ If 
$x_1 \neq 0,$ then $\lambda =-1$ or $\lambda=3.$ Therefore, the assertion holds.
\end{proof}
\begin{example}
Consider the normalized symmetric conference matrix of order $ 6 $ given by 
$$
\left[ {\begin{array}{*{20}{c}}
	0&1&1&1&1&1\\
	1&0&1&-1&-1&1\\
	1&1&0&1&-1&-1\\
	1&-1&1&0&1&-1\\
	1&-1&-1&1&0&1\\
	1&1&-1&-1&1&0\\
\end{array}} \right]
$$
By Lemma \ref{R1}, we have good signings $ \sigma_{1}, \sigma_{2} $ and $ \sigma_3 $ for $ K_7 , K_8$ and $ K_9 ,$ respectively, such that their adjacency matrices are as below. 
$$
A^{\sigma_{1}}=
\left[ {\begin{array}{*{20}{c}}
		0&1&1&1&1&1&1\\
		1&0&1&1&1&1&1\\
		1&1&0&1&-1&-1&1\\
		1&1&1&0&1&-1&-1\\
		1&1&-1&1&0&1&-1\\
		1&1&-1&-1&1&0&1\\
		1&1&1&-1&-1&1&0\\
\end{array}} \right]
$$
,
$$
A^{\sigma_{2}}=
\left[ {\begin{array}{*{20}{c}}
		0&1&1&1&1&1&1&1\\
		1&0&1&1&1&1&1&1\\
		1&1&0&1&1&1&1&1\\
		1&1&1&0&1&-1&-1&1\\
		1&1&1&1&0&1&-1&-1\\
		1&1&1&-1&1&0&1&-1\\
		1&1&1&-1&-1&1&0&1\\
		1&1&1&1&-1&-1&1&0\\
\end{array}} \right]
$$
,
$$
A^{\sigma_{3}}=
\left[ {\begin{array}{*{20}{c}}
		0&-1&1&-1&1&1&1&1&1\\
		-1&0&-1&1&1&1&1&1&1\\
		1&-1&0&1&1&1&1&1&1\\
		-1&1&1&0&1&1&1&1&1\\
		1&1&1&0&1&1&1&1&1\\
		1&1&1&1&0&1&-1&-1&1\\
		1&1&1&1&1&0&1&-1&-1\\
		1&1&1&1&-1&1&0&1&-1\\
		1&1&1&1&-1&-1&1&0&1\\
		1&1&1&1&1&-1&-1&1&0\\
\end{array}} \right]
$$
\end{example}
\section{Good signing for lexicographic product of graphs}
In this section, 
we consider graphs $G \circ \overline {K_2}$ 
and $G \circ \overline {K_4}$ to find relation between edge-signing 
of $G$ and its lexicographic product. In the former case, we show that for a non-bipartite regular graph $ G ,$ if $ G $ is decomposable to two regular bipartite graphs, then there exists a good signing for $ G \circ \overline {K_2}.$ In the latter case, if $ G $ is regular with a good signing, then $ G \circ \overline {K_4}$ has a good signing.
First consider $ G \circ \overline {K_2}.$ Assume that $V(\overline {K_2})= 
\lbrace u,v \rbrace .$ For each $x,y \in V(G)$ that $x$ is adjacent to $y,$ 
we consider three edge-signing families
for the induced subgraph on vertices $\lbrace x,y \rbrace  \times \lbrace u,v \rbrace,$ which 
is isomorphic to $C_4,$ as bellow:
\begin{itemize} \item[(i)] 
All edges have signing $1$.
\item[(ii)] All edges have signing $-1$.
\item[(iii)] sign the edges alternatives by $1,-1$
\end{itemize} 
Now, consider an arbitrary signing $(G \circ \overline {K_2})^{\sigma}$ such 
that $\sigma$ satisfies in above condition and 
define each cell as $\lbrace x \rbrace  \times \lbrace u,v \rbrace ,$ where $x \in V(G).$ Therefore, 
$B$ is a matrix of order $n$ and $B_{ij} \in \lbrace-2,0,2 \rbrace.$ Now, define subgraphs 
$H_1$ and $H_2$ of $G$ such that $V(H_1)=V(G)$ and $xy \in E(H_1)$ if $xy \in E(G)$ and 
edge-signing of the induced subgraph 
$\lbrace x,y \rbrace  \times \lbrace u,v \rbrace$ is not satisfy in $(iii),$ and 
$V(H_2)=V(G)$ and $E(H_2)=E(G) \setminus E(H_1).$
Now, define signings $H_1^{\sigma_1}$ and  $H_2^{\sigma_2}$ from $\sigma$ as bellow:
\begin{itemize}
\item[(a)] $\sigma_1(xy)=\sigma(xu),$ if $xy \in E(H_1).$

\item[(b)] $\sigma_2(xy)=\sigma(xu),$ if $xy \in E(H_2).$
\end{itemize}
It is not difficult to see that $ A^{\sigma_{1}}=\frac{1}{2} B$.
Suppose that 
$A^{\sigma}X= \lambda X,$  
Depending on $ P^TX $ is zero or not, consider the following two cases:\\
\textbf{Case 1.}
If $P^TX \neq \textbf{0},$ then $\lambda$ is an eigenvalue of $B.$ Hence, $2 \lambda$ is an eigenvalue of 
$H_1^{\sigma_1}.$ \\
\textbf{Case 2.}
If $P^TX = \textbf{0},$ then we get $x_i=-x_{i+1},$ for each 
odd number $i, 1 \leq i \leq 2n.$ By above signing, for the adjacent vertices $x$ and $y$ in $V(G),$ the submatrix 
correspond to the vertices $\lbrace x,y \rbrace  \times \lbrace u,v \rbrace $ is one of the following matrices: 
$$\left[ {\begin{array}{*{20}{c}}
1&1\\
1&1
\end{array}} \right]
\left[ {\begin{array}{*{20}{c}}
-1&-1\\
-1&-1
\end{array}} \right]
\left[ {\begin{array}{*{20}{c}}
1&-1\\
-1&1
\end{array}} \right]
\left[ {\begin{array}{*{20}{c}}
-1&1\\
1&-1
\end{array}} \right]
$$
By multiplying the fist or second matrices by the 
vector $ [x_i,x_{i+1}]^{T} ,$ we get 
the vector $ \textbf{0} .$ If we multiply the third or fourth matrices by 
$ [x_i,x_{i+1}]^{T} ,$ then we have  $ 2[x_i,x_{i+1}]^{T}$ or 
$ -2[x_i,x_{i+1}]^{T},$ respectively. 
Now, by multiplying $A^{\sigma}$ 
by $X,$ we deduce that $\lambda$ is twice of an eigenvalue of matrix 
corresponded to graph $H_2^{\sigma_2}.$ Thus, we can deduce that 
$\rho(G^{\sigma}) \leq 2 \ \T{max} \lbrace \rho(H_1^{\sigma_{1}}), \rho(H_2^{\sigma_{2}})\rbrace.$

Assume that $G$ is a non-bipartite graph and it decomposes to two bipartite graphs $H_1$ and $H_2.$
\begin{lemma}
Let non-bipartite graph $G$ be decomposable to $H_1$ and $H_2$ such that 
$\Delta(G)=2\Delta(H_1)=2\Delta(H_2).$ If there are good signings for $H_1$ and $H_2,$ then 
there is a good signing for $G \circ \overline {K_2}.$ 
\end{lemma}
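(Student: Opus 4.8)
The plan is to run the correspondence set up in the paragraphs preceding the lemma in reverse. There we started from a signing $\sigma$ of $G \circ \overline{K_2}$ (whose restriction to each $C_4$ block $\{x,y\}\times\{u,v\}$ is of type (i), (ii) or (iii)) and extracted a decomposition $G=H_1\cup H_2$ together with signings $\sigma_1,\sigma_2$, arriving at the bound $\rho((G \circ \overline{K_2})^{\sigma})\leq 2\,\T{max}\{\rho(H_1^{\sigma_1}),\rho(H_2^{\sigma_2})\}$. Since this inequality is already available, the whole task reduces to (a) producing a $\sigma$ whose induced data are precisely the given decomposition and the given good signings, and (b) checking that the numerical bound it yields stays below the Ramanujan threshold of $G \circ \overline{K_2}$.

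First I would build $\sigma$ explicitly. For each edge $xy\in E(H_1)$, sign all four edges of the block $\{x,y\}\times\{u,v\}$ by $+1$ if $\sigma_1(xy)=1$ and by $-1$ if $\sigma_1(xy)=-1$; this realizes type (i) or (ii). For each edge $xy\in E(H_2)$, sign the block alternately (type (iii)), choosing the alternation so that the edge $(x,u)(y,u)$ receives the sign $\sigma_2(xy)$. Because $\{E(H_1),E(H_2)\}$ partitions $E(G)$, every block is assigned exactly one type, so $\sigma$ is well defined and meets the block condition required for the quotient computation. By the very definition of the induced signings (a)--(b), $\sigma$ returns $\sigma_1$ and $\sigma_2$; in particular $A^{\sigma_1}=\tfrac12 B$, so the case analysis (the $P^TX\neq\textbf{0}$ case giving eigenvalues of $B=2A^{\sigma_1}$, and the $P^TX=\textbf{0}$ case giving twice the eigenvalues of $A^{\sigma_2}$) applies verbatim and delivers the displayed bound.

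Next I would do the degree bookkeeping and the arithmetic. Writing $d=\Delta(G)$, regularity of $G$ and the hypothesis $\Delta(G)=2\Delta(H_1)=2\Delta(H_2)$ force $H_1$ and $H_2$ to be $\tfrac{d}{2}$-regular; since they are bipartite and carry good signings, $\rho(H_i^{\sigma_i})\leq 2\sqrt{d/2-1}$. Each vertex $(x,u)$ of $G \circ \overline{K_2}$ is adjacent to both copies of every $G$-neighbour of $x$, so $G \circ \overline{K_2}$ is $2d$-regular and its Ramanujan threshold is $2\sqrt{2d-1}$. Combining with the bound gives $\rho((G \circ \overline{K_2})^{\sigma})\leq 4\sqrt{d/2-1}$, and squaring reduces the desired inequality $4\sqrt{d/2-1}\leq 2\sqrt{2d-1}$ to $8d-16\leq 8d-4$, which always holds. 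Hence $\sigma$ is a good signing of $G \circ \overline{K_2}$.

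I do not expect a genuine obstacle here: the two things that actually need care are the surjectivity of the correspondence (that an arbitrary pair of target signings $(\sigma_1,\sigma_2)$ is realized by some block-typed $\sigma$, which the explicit recipe above guarantees) and the bookkeeping that forces the $\tfrac{d}{2}$-regularity of the $H_i$ and the $2d$-regularity of the product. The non-bipartiteness of $G$ plays no role in the estimate; it serves only to make the statement non-vacuous, since a bipartite $G$ would make $G \circ \overline{K_2}$ bipartite and hence already covered by the known bipartite case of Conjecture \ref{L1}. The one point worth flagging is the implicit requirement $d>2$, needed so that $\Delta(H_i)>1$ and the notion of a good signing for $H_i$ is meaningful.
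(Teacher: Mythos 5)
Your proposal is correct and follows essentially the same route as the paper: the lemma's proof in the paper is exactly the discussion preceding it (block signings of types (i)--(iii), the equitable partition with $B=2A^{\sigma_1}$, the two cases on $P^TX$, and the bound $\rho\leq 2\max\{\rho(H_1^{\sigma_1}),\rho(H_2^{\sigma_2})\}$), and you simply run that correspondence in reverse and add the arithmetic $4\sqrt{d/2-1}\leq 2\sqrt{2d-1}$ that the paper leaves implicit. The only small blemish is your appeal to ``regularity of $G$,'' which the lemma does not assume and which is not needed, since the definition of a good signing already gives $\rho(H_i^{\sigma_i})\leq 2\sqrt{\Delta(H_i)-1}$.
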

\begin{corollary}
If $G$ is a non-bipartite $d$-regular graph and decomposable to $\frac{d}{2}$-regular bipartite graphs $H_1$ 
and $H_2,$ then $G \circ \overline {K_2}$ has a good signing. 
\end{corollary}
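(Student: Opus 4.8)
The plan is to invoke the spectral estimate $\rho\big((G\circ\overline{K_2})^{\sigma}\big)\le 2\,\max\{\rho(H_1^{\sigma_1}),\rho(H_2^{\sigma_2})\}$ established above, but to run the construction in reverse: rather than starting from a signing of the product, I start from the two given good signings and assemble a compatible signing $\sigma$ of $G\circ\overline{K_2}$. Set $\Delta=\Delta(G)$, so that by hypothesis $\Delta(H_1)=\Delta(H_2)=\Delta/2$. Every vertex $(x,u)$ of the product has exactly $2\,\T{d}(x)$ neighbours, hence $\Delta(G\circ\overline{K_2})=2\Delta$ and a signing of the product is good precisely when its spectral radius does not exceed $2\sqrt{2\Delta-1}$.

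First I would fix $\sigma$ cell by cell over the copies of $C_4$ carried by the edges of $G$. For an edge $xy\in E(H_1)$ I sign the corresponding $C_4$ by family (i) when $\sigma_1(xy)=1$ and by family (ii) when $\sigma_1(xy)=-1$; for an edge $xy\in E(H_2)$ I use family (iii), selecting the alternating pattern so that $\sigma(xu)=\sigma_2(xy)$. By construction $\sigma$ meets the standing requirement on every $C_4$, the subgraphs and induced signings it produces are exactly the given $H_1^{\sigma_1}$ and $H_2^{\sigma_2}$, and therefore $A^{\sigma_1}=\tfrac12 B$ as before. The two cases of the earlier analysis then apply verbatim: eigenvectors with $P^TX\neq\mathbf{0}$ yield eigenvalues of $B=2A^{\sigma_1}$, while eigenvectors with $P^TX=\mathbf{0}$, those satisfying $x_i=-x_{i+1}$ on each cell, yield twice the eigenvalues of $H_2^{\sigma_2}$.

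It then remains only to insert the good-signing bounds. Since $\sigma_1$ and $\sigma_2$ are good and $\Delta(H_1)=\Delta(H_2)=\Delta/2$, we have $\rho(H_i^{\sigma_i})\le 2\sqrt{\Delta/2-1}$, so that
\[
\rho\big((G\circ\overline{K_2})^{\sigma}\big)\le 2\,\max\{\rho(H_1^{\sigma_1}),\rho(H_2^{\sigma_2})\}\le 4\sqrt{\tfrac{\Delta}{2}-1}=2\sqrt{2\Delta-4}\le 2\sqrt{2\Delta-1}.
\]
As $\Delta(G\circ\overline{K_2})=2\Delta$, this is exactly the inequality certifying that $\sigma$ is a good signing, and the lemma follows.

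The step I expect to carry the real weight is the reverse realizability in the second paragraph: one must verify that the four-way choice of $C_4$-patterns glues to a single globally consistent $\sigma$ whose two induced signings are the prescribed good ones, and in particular that the family-(iii) edges feed precisely the matrix of $H_2^{\sigma_2}$ through the $P^TX=\mathbf{0}$ eigenvectors, so that the spectral contributions of $H_1$ and $H_2$ decouple cleanly. By contrast the hypothesis that $G$ is non-bipartite does not enter the estimate itself, since the gap between $2\Delta-4$ and $2\Delta-1$ leaves ample room; its purpose is to ensure that $G\circ\overline{K_2}$ is itself non-bipartite, so that the resulting good signing is genuinely new and not already furnished by the known bipartite case.
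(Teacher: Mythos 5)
Your construction and your spectral estimate follow exactly the paper's route: the corollary is meant to be the lemma immediately preceding it, specialized via the discussion before that lemma, and your reverse assembly of $\sigma$ from $\sigma_1,\sigma_2$ together with the arithmetic $4\sqrt{d/2-1}=2\sqrt{2d-4}\le 2\sqrt{2d-1}$ is the intended argument. However, there is one genuine gap: the corollary's hypotheses do not hand you ``two given good signings.'' That is the hypothesis of the \emph{lemma}; the corollary replaces it by the assumption that $H_1$ and $H_2$ are $\frac{d}{2}$-regular \emph{bipartite} graphs, and the missing step---the entire point of that hypothesis---is that regular bipartite graphs are already known to satisfy Conjecture \ref{L1}. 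By the theorem of Marcus, Spielman and Srivastava (\cite{marc}, cited in the introduction as the proof of the conjecture for bipartite graphs), each $\frac{d}{2}$-regular bipartite $H_i$ admits a signing $\sigma_i$ with $\rho(H_i^{\sigma_i})\le 2\sqrt{d/2-1}$. You must invoke this to produce $\sigma_1$ and $\sigma_2$ before your construction can begin; as written, your argument proves the lemma, not the corollary.

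Relatedly, your closing remark misplaces where bipartiteness does the work. You discuss only the non-bipartiteness of $G$ (correctly noting that it does not enter the estimate and mainly prevents the result from being subsumed by the known bipartite case), but you never use the bipartiteness of $H_1$ and $H_2$ anywhere---yet that is the load-bearing hypothesis, since it is precisely what furnishes the good signings your proof consumes. Once that citation is inserted, the rest of your argument (the cell-by-cell assembly of $\sigma$ so that the induced signed subgraphs are exactly $H_1^{\sigma_1}$ and $H_2^{\sigma_2}$, the identification $B=2A^{\sigma_1}$, the two-case analysis according to whether $P^TX$ vanishes, and the final inequality) is correct and matches the paper.
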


\begin{example}
Consider two cycles of length six $H_1$ and $H_2$ 
such that they isomorphic to
$ u_1v_1u_2v_2u_3v_3u_1$ and $ u_1v_2v_1v_3u_2u_3u_1,$ respectively $ ($Fig. $1)$. Clearly, $ H_1 $ and $ H_2 $ are 
$ 2 $-regular bipartite graphs of a non-bipartite $ 4 $-regular graph Fig. $ 1.$
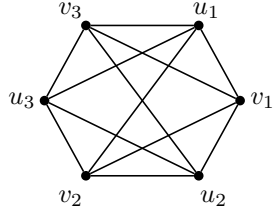
\begin{figure}[H]
	\centering
	\begin{tikzpicture}[node distance=\GraphNodeDistance, >=stealth',
		minimum size=\GraphNodeSize, inner sep=\GraphInnerSep, line width=\GraphLineWidth]
		\tikzstyle{init} = [pin edge={to-, thin, white}]
		\tikzstyle{place}=[circle, draw ,thick,fill=black]
		\tikzstyle{label}=[circle , minimum size=1 pt,thick]
		
		\node [place] (v1) at (0.75,0) {};
		\node [label] at (0.85,0.2) {$u_1$};
		
		\node [place] (v2) at (-0.75,0) {};
		\node [label] at (-0.95,0.2) {$v_3$};

		\node [place] (v3) at (1.3,-1) {};
		\node [label] at (1.6,-1) {$v_1$};

		\node [place] (v4) at (-1.3,-1) {};
		\node [label] at (-1.6,-1)
        {$u_3$};

		\node [place] (v5) at (.75,-2) {};
		\node [label] at (0.95,-2.3) {$u_2$};

		\node [place] (v6) at (-0.75,-2) {};
		\node [label] at (-0.95,-2.3) {$v_2$};

		\draw[-] (v1) -- (v2);
	    \draw[-] (v2) -- (v4);
	    \draw[-] (v6) -- (v4);
	    \draw[-] (v6) -- (v5);
	    \draw[-] (v5) -- (v3);
	    \draw[-] (v1) -- (v3);

	    \draw[-] (v1) -- (v6);
	    \draw[-] (v6) -- (v3);
	    \draw[-] (v3) -- (v2);
	    \draw[-] (v5) -- (v2);
	    \draw[-] (v5) -- (v4);
	    \draw[-] (v4) -- (v1);

		
		
		\%node [label] (uk) at (2,-0.5) [right] {};

		

		
	\end{tikzpicture}
	\caption{A non-bipartite graph of order four }
\end{figure}
\end{example}
\begin{theorem}
If there is a good signing for a $d$-regular graph $G,$ 
then there is a good signing for $G \circ \mathop {K_4}\limits^{-}.$
\end{theorem}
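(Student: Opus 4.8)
The plan is to exhibit an explicit good signing of $G\circ\ov{K_4}$ built from the given good signing $\sigma$ of $G$ together with one fixed symmetric Hadamard matrix of order $4$. First I would record the structure of the product: since $\ov{K_4}$ has no edges, $(x,i)\sim(y,j)$ in $G\circ\ov{K_4}$ if and only if $xy\in E(G)$, so each vertex of $G$ is blown up into four independent copies and each edge $xy$ is replaced by a complete bipartite graph on the copies of $x$ and of $y$. In particular $G\circ\ov{K_4}$ is $4d$-regular, so a good signing is one whose signed adjacency matrix $A^{\tau}$ satisfies $\rho(A^{\tau})\le 2\sqrt{4d-1}$.

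Next I would fix the symmetric order-$4$ Hadamard matrix
$$H=\begin{pmatrix}1&1&1&1\\1&-1&1&-1\\1&1&-1&-1\\1&-1&-1&1\end{pmatrix},$$
which satisfies $H=H^{T}$ and $HH^{T}=4I$, so its eigenvalues are all $\pm 2$ and $\rho(H)=2$. Using the four copies $\{1,2,3,4\}$ attached to every vertex, I define a signing $\tau$ of $G\circ\ov{K_4}$ by $\tau\big((x,i)(y,j)\big)=\sigma(xy)\,H_{ij}$ whenever $xy\in E(G)$. Because $H$ is symmetric this assignment is consistent (the block on the copies of $x,y$ is the transpose of the block on $y,x$), and every entry is a sign since $\sigma(xy),H_{ij}\in\{-1,1\}$; note that the diagonal entries of $H$ cause no trouble, as they label genuine edges $(x,i)(y,i)$ with $x\neq y$. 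Ordering the $4n$ vertices block by block, the $(x,y)$ block of $A^{\tau}$ equals $\sigma(xy)H$ when $xy\in E(G)$ and $0$ otherwise, which is exactly the Kronecker product $A^{\tau}=A^{\sigma}\otimes H$.

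From this factorization the spectrum of $A^{\tau}$ is $\{\lambda\mu:\lambda\in\mathrm{Spec}(A^{\sigma}),\ \mu\in\{-2,2\}\}$, hence $\rho(A^{\tau})=\rho(A^{\sigma})\,\rho(H)=2\rho(A^{\sigma})$. Since $\sigma$ is good for the $d$-regular graph $G$ we have $\rho(A^{\sigma})\le 2\sqrt{d-1}$, so $\rho(A^{\tau})\le 4\sqrt{d-1}$, and the argument closes by the elementary inequality $4\sqrt{d-1}\le 2\sqrt{4d-1}$, equivalent to $16(d-1)\le 4(4d-1)$, i.e. $-16\le -4$. The step carrying all the weight—where a naive attempt runs into the obstacle—is the choice of $H$: replacing each edge-block by a symmetric Hadamard matrix of spectral radius $\sqrt{4}=2$ instead of the all-ones block $J_4$ of spectral radius $4$ both decouples the spectrum into a clean Kronecker product and, crucially, eliminates the zero eigenvalue that forced the awkward $P^{T}X=\mathbf{0}$ case in the $\ov{K_2}$ analysis. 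This is also why order $4$ is so comfortable: a symmetric Hadamard matrix of that order exists, the slack $16d-16\le 16d-4$ is ample, and no decomposition hypothesis on $G$ is required. The only routine verifications left are that $A^{\tau}=A^{\sigma}\otimes H$ under the chosen ordering and the standard fact that the eigenvalues of a Kronecker product are the pairwise products of eigenvalues.
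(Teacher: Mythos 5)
Your proof is correct, and it takes a genuinely different route from the paper on the key spectral step, even though the underlying construction is the same in spirit. The paper also replaces each edge $xy$ by $\sigma(xy)$ times a fixed symmetric $\pm 1$ matrix of order $4$ squaring to $4I$ --- namely $J_4-2I_4$ (all signs equal to $\sigma(xy)$ except the four ``parallel'' edges $(x,u_i)(y,u_i)$, which get $-\sigma(xy)$), so its signed adjacency matrix is likewise $A^{\sigma}\otimes M$ with $M$ a symmetric Hadamard matrix of eigenvalues $\pm 2$; your $H$ is just a different choice of $M$. Where you diverge is the analysis: the paper stays within its equitable-partition framework, taking the cells $\{x\}\times V(\overline{K_4})$, computing the quotient matrix $B=2A^{\sigma}$, and splitting into the cases $P^{T}X\neq \mathbf{0}$ (giving $\lambda=2\lambda'$) and $P^{T}X=\mathbf{0}$ (giving, after a computation that the paper states rather tersely, $\lambda=-2\lambda'$). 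Your Kronecker factorization $A^{\tau}=A^{\sigma}\otimes H$ with $\mathrm{Spec}(A\otimes H)=\{\lambda\mu\}$ subsumes both cases in one line and is airtight, at the cost of not illustrating the equitable-partition technique that is the paper's theme. Your method also generalizes beyond what the paper gets: taking the symmetric order-$2$ Hadamard matrix $\left[\begin{smallmatrix}1&1\\1&-1\end{smallmatrix}\right]$ gives $\rho(A^{\sigma}\otimes H_2)=\sqrt{2}\,\rho(A^{\sigma})\leq 2\sqrt{2d-2}\leq 2\sqrt{2d-1}$, i.e.\ a good signing of $G\circ\overline{K_2}$ with no decomposition hypothesis at all, which strengthens the paper's lemma for that case; this also shows your closing remark --- that order $4$ is special because a symmetric Hadamard matrix exists there --- slightly undersells your own argument, since order $2$ works too.
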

\begin{proof}
Suppose that $V(\overline{K_4})= 
\lbrace u_1,u_2, u_3,u_4 \rbrace $ and $\sigma $ is a signing of $G$ such that $\rho(G^{\sigma}) \leq 2 \sqrt {d-1}.$ If $xy \in E(G),$ then 
the subgraph induced by $\lbrace x,y \rbrace  \times \lbrace  u_1,u_2, u_3,u_4 \rbrace$ is 
isomorphic to $K_{4,4}.$ Now, let $xy \in E(G)$ and define a signing
$\sigma'$ of $G \circ \overline {K_4}$ as follow. If $\sigma(xy)=1$ or $\sigma(xy)=-1,$ then 
sign of the subgraph induced by $\lbrace x,y \rbrace  \times \lbrace  u_1,u_2, u_3,u_4 \rbrace$ 
is defined as $(i)$ and $(ii),$ respectively.
\begin{itemize} \item[(i)] 
All edges have signing $1$ except the edges between $(x,u_i)$ and $(y,u_i),$ for $1 \leq i \leq 4.$
\item[(ii)] All edges have signing $-1$ unless the edges between $(x,u_i)$ and $(y,u_i),$ for $1 \leq i \leq 4.$
\end{itemize} 
For each $x \in V(G)$ assume that $\lbrace x \rbrace  \times \lbrace u_1,u_2, u_3,u_4 \rbrace $ 
is one cell. By the signing $\sigma',$ this partition is an equitable partition. In this case, $B$ is a matrix of order 
$n$ and $B_{ij} \in \lbrace -2,0,2 \rbrace .$ Now, let $\lambda $ be an eigenvalue of $(G \circ \overline {K_4})^{\sigma'}.$ 
Then there exists non-zero vector $X$ such that $A^{\sigma}X= \lambda X.$
Now, consider the following two cases:\\
\textbf{Case 1.}
If $P^TX \neq \textbf{0},$ then $\lambda$ is an eigenvalue of $B.$
Since $ B=2A^{\sigma},$ we have $\lambda =2 \lambda', $ 
where $\lambda'$ is an eigenvalue of 
$G^{\sigma}.$ \\
\textbf{Case 2.}
If $P^TX = \textbf{0},$ then 
by using corresponded entries of $ X $
for each cell we have
$x_i+x_{i+1}+x_{i+2}+x_{i+3}=0,$ where $i =4q+1$ and $ 0 \leq q \leq n-1.$ 
Clearly, the submatrix 
correspond to the vertices $\lbrace x,y \rbrace  \times \lbrace u_1,u_2, u_3,u_4 \rbrace $ is one of the following matrices: 
$$\left[ {\begin{array}{*{20}{c}}
		
-1&1&1&1\\
1&-1&1&1\\
1&1&-1&1\\
1&1&1&-1
\end{array}} \right] 
\left[ {\begin{array}{*{20}{c}}
1&-1&-1&-1\\
-1&1&-1&-1\\
-1&-1&1&-1\\
-1&-1&-1&1
\end{array}} \right]
$$
In this case, $\lambda = -2 \lambda',$ where $\lambda'$ is an eigenvalue of $G^{\sigma}.$ Thus, we can state 
$\rho(A^{\sigma'})=2 \rho(A^{\sigma}).$ From the fact $ \rho(A^{\sigma}) \leq 2 \sqrt {d-1},$ we deduce that 
$$\rho(A^{\sigma'})=2\rho(A^{\sigma}) \leq 2 \sqrt {4d-1}.$$
\end{proof}
\begin{definition}
	Suppose that $ \sigma $ and $ \sigma' $ be two good signing for graph $ G.$ If there is a diagonal matrix $ D $ with entries of $ 1 $ and $ -1 $ such that $DA^{\sigma}D=A^{\sigma'},$ then $ \sigma $ and $ \sigma' $ is \textit{equivalent}. 
\end{definition}
\begin{definition}
	Let $ A $ and $ B $ be two matrices of order $ n.$ Then define a new product $ A \ast B $ as follow:
	$$ [A\ast B]_{ij}=[A]_{ij}[B]_{ij} $$
\end{definition}
Let $\sigma$ and $\sigma'$ be two good signings for $G$ which are not equivalent.
Now, we define a new signing $\tau$ for $G$ such that 
adjacency matrix of $G^{\tau}$ is equal to 
$ A^{\sigma} \ast A^{\sigma'}.$ In other words, the sign of $ uv \in E(G) $ equals
$ \sigma(uv) \sigma'(uv).$
Consider a $2$-lift graph of 
$G$ by $\tau,$ say $G'.$ Assume that 
$$V(G')= \bigcup\limits_{v \in V(G)} {\lbrace v_0, v_1\rbrace} 
.$$ Now, define a new signing  $\phi$ for graph $G'$ such that the sign of the subgraph induced by $ \lbrace u_0,v_0,u_1,v_1\rbrace $ is the same as $ \sigma
'(uv) $
 
\begin{theorem} \label{a2}
If $\sigma$ and $\sigma'$ are two good signings for $G,$ then $\phi$ is a good signing for $G'.$
\end{theorem}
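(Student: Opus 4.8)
The plan is to exploit the same equitable-partition machinery used throughout the paper, taking the cells of the $2$-lift to be the fibers of the covering map. Concretely, I would let $\pi$ be the partition of $V(G')$ whose cells are $C_v=\lbrace v_0,v_1\rbrace$ for each $v\in V(G)$, and first verify that $\pi$ is equitable for $(G')^{\phi}$. Fix adjacent $u\sim w$ in $G$ and a vertex $x\in C_u$. By construction the two lifted edges joining $C_u$ to $C_w$ both carry the sign $\sigma'(uw)$, while the $2$-lift structure (determined by $\tau$) guarantees that each vertex of $C_u$ meets exactly one vertex of $C_w$. Hence $\T{d}(x,C_w)=\sigma'(uw)$ independently of whether $x=u_0$ or $x=u_1$, so $\pi$ is equitable and its quotient matrix is exactly $B=A^{\sigma'}$.

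With $B=A^{\sigma'}$ in hand, I would invoke Proposition \ref{a1} together with the old/new eigenvalue dichotomy recorded after it. Let $\lambda$ be an eigenvalue of $A^{\phi}$ with eigenvector $X$. In the first case $P^{T}X\neq \mathbf{0}$, so $\lambda$ is an eigenvalue of $B=A^{\sigma'}$; since $\sigma'$ is a good signing this gives $|\lambda|\leq 2\sqrt{d-1}$ at once.

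The substance of the argument is the second case, $P^{T}X=\mathbf{0}$, which forces $x_{v_1}=-x_{v_0}$ on every cell. I would compute $(A^{\phi}X)_{u_0}$ directly: for each neighbor $w$ of $u$ the unique edge from $u_0$ into $C_w$ lands on $w_0$ when $\tau(uw)=1$ and on $w_1$ when $\tau(uw)=-1$, and both edges carry sign $\sigma'(uw)$. Using $x_{w_1}=-x_{w_0}$, the coefficient of $x_{w_0}$ collapses to $\sigma'(uw)\,\tau(uw)$, and the defining relation $\tau(uw)=\sigma(uw)\sigma'(uw)$ with $\sigma'(uw)^2=1$ reduces this to $\sigma(uw)$. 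Thus $A^{\phi}$, restricted to the antisymmetric subspace, acts as $A^{\sigma}$ on the vector $X_0=(x_{v_0})_{v\in V(G)}$, which is nonzero because $X\neq\mathbf{0}$ forces $X_0\neq\mathbf{0}$. Consequently $\lambda$ is an eigenvalue of $A^{\sigma}$, and since $\sigma$ is good we again obtain $|\lambda|\leq 2\sqrt{d-1}$. Combining the two cases yields $\rho((G')^{\phi})\leq 2\sqrt{d-1}$, and since $G'$ is $d$-regular this is precisely the statement that $\phi$ is a good signing.

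I expect the main obstacle to be the bookkeeping in the case $P^{T}X=\mathbf{0}$, where one must simultaneously track how the $\tau$-lift redirects each edge of $C_u$ to the correct fiber vertex of $C_w$, how $\phi$ attaches the sign $\sigma'(uw)$, and how the antisymmetry $x_{w_1}=-x_{w_0}$ turns a ``$w_1$'' contribution into a ``$-w_0$'' contribution. The clean cancellation $\sigma'\,\tau=\sigma$ is exactly what makes the two good signings cooperate, so verifying that identity edge-by-edge is the crux of the proof.
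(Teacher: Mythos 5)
Your proposal is correct and follows essentially the same route as the paper: the fiber cells $\lbrace v_0,v_1\rbrace$ give an equitable partition with quotient matrix $B=A^{\sigma'}$ handling the case $P^{T}X\neq\mathbf{0}$, while in the case $P^{T}X=\mathbf{0}$ the antisymmetry $x_{v_1}=-x_{v_0}$ and the cancellation $\sigma'\tau=\sigma'\sigma\sigma'=\sigma$ (which the paper phrases as the matrix identity $A^{\tau}\ast A^{\sigma'}=A^{\sigma}$ after deleting even rows) reduce the action to $A^{\sigma}$. Your entrywise bookkeeping, including the explicit check that the partition is equitable and that $X_0\neq\mathbf{0}$, is just a more careful rendering of the paper's argument, not a different one.
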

\begin{proof}
For each $ v \in V(G),$ suppose that ${\lbrace v_0, v_1\rbrace}$ is a cell of $G'$. In this case, $ G' $ has $ n $ cells as $ C_1, \cdots ,C_n .$ This partition is equitable and for each vertex $ v_i \in V(G'),$ we have 
$ d(v_i, C_j) \in \lbrace 1,-1 \rbrace .$ Clearly, $ B $ is a matrix of order $ n $ and is equal to adjacency matrix of $ G^{\sigma'}.$
Suppose that $\lambda$ is an eigenvalue of $G'^{\phi}$ and $X$ is a non-zero vector of $\lambda.$
Depends on $ P^TX$ is zero or not, we have the following cases:\\
\textbf{Case 1.}
 Assume that $P^TX$ is non-zero. Hence, $\lambda$ is an 
eigenvalue of matrix $B.$ Since 
$B= A^{\sigma'},$ we deduce that 
$\lambda$ is an eigenvalue of 
$ A^{\sigma'}.$ By the fact that, $ \sigma' $ is a good signing for $ G $, we get 
$ |\lambda|< 2 \sqrt{d-1} .$
\\
\textbf{Case 2.} 
Suppose that $P^TX= \textbf{0}.$ 
Since 
$$
P^T=
\left[ {\begin{array}{*{20}{c}}
		
		1&1&0&\ldots&0\\
		0&1&1&\ldots&0\\
		&&\vdots \\
		0& \ldots &0&1&1
\end{array}} \right]_{n \times 2n}
$$ 
and
$$X^T=[x_1,x_2, \cdots, x_{2n-1},x_{2n}],$$
we have 
$x_{i}=-x_{i+1},$ for each 
odd number $i, 1 \leq i \leq 2n.$ 
So, we have 
$$ X^T=[x_1,-x_1,x_3,-x_3, \cdots, x_{2n-1},-x_{2n-1}] .$$
By removing even rows of $ A^{\phi},$ we have a matrix of order $ n \times 2n,$ say $ D .$ It is not difficult to see that 
$$ DX= (A^{\tau} \ast A^{\sigma'}) [x_1,x_3, \cdots, x_{2n-1}]^T .$$ On the other hand, 
$$ (A^{\tau} \ast A^{\sigma'})= (A^{\sigma} \ast A^{\sigma'}) \ast A^{\sigma'}= A^{\sigma} \ast (A^{\sigma'} \ast A^{\sigma'}) .$$ 
Since $A^{\sigma'} \ast A^{\sigma'}=A,$ where $ A $ is adjacency matrix of non-signing $ G.$ The entries of matrix $ A $ is one or zero. Hence, 
$$ (A^{\tau} \ast A^{\sigma'})= A^{\sigma}.$$ 
Since eigenvalues of $ A^{\phi} $ is the same as $ D ,$ and 
$$ DX= (A^{\sigma} ) [x_1,x_3, \cdots, x_{2n-1}]^T,$$ 
we can infer that $\lambda$ is an eigenvalue of 
matrix $A^{\sigma}.$
From the assumptions, $\rho(A^{\sigma}), \rho(A^{\sigma'}) \leq 2 \sqrt {d-1}$ and we 
deduce that $ \rho(A^{\tau}) \leq 2 \sqrt {d-1}.$
\end{proof}
Let $G''$ be a $2$-lift graph that is obtained from $G'$ by signing $\phi.$ Now, we can 
state the following theorem.
\begin{example}
	Consider graph $ G $ with four vertices as Fig. .....

	\begin{figure}[H]
		\centering
		\begin{tikzpicture}[node distance=\GraphNodeDistance, >=stealth',
			minimum size=\GraphNodeSize, inner sep=\GraphInnerSep, line width=\GraphLineWidth]
			\tikzstyle{init} = [pin edge={to-, thin, white}]
			\tikzstyle{place}=[circle, draw ,thick,fill=black]
			\tikzstyle{label}=[circle , minimum size=1 pt,thick]
			
			\node [place] (v1) at (0,0) {};
			\node [label] at (0,0.3) {$u$};
			
			\node [place] (v2) at (2,0) {};
			\node [label] at (2,0.3) {$v$};

			\node [place] (v3) at (2,-2) {};
			\node [label] at (2,-2.3) {$w$};

			\node [place] (v4) at (0,-2) {};
			\node [label] at (0,-2.3)
			{$z$};

			\draw[-] (v1) -- (v2);
			\draw[-] (v2) -- (v3);
			\draw[-] (v3) -- (v4);
			\draw[-] (v1) -- (v4);
			\draw[-] (v2) -- (v4);

			\%node [label] (uk) at (2,-0.5) [right] {};

		\end{tikzpicture}
		\caption{Graph $ G .$ }
	\end{figure}
\end{example}
Suppose that $ \sigma $ and $ \sigma'$ are two good edge-signing such that 
$$
A^{\sigma}=
\left[ {\begin{array}{*{20}{c}}
		
		0&1&0&1\\
		1&0&1&-1\\
		0&1&0&1 \\
		1&-1&1&0
\end{array}} \right], \  \ 
A^{\sigma'}=
\left[ {\begin{array}{*{20}{c}}
		
		0&-1&0&1\\
		-1&0&1&-1\\
		0&1&0&1 \\
		1&-1&1&0
\end{array}} \right]
$$
By Theorem \ref{a2}, we have 
$$
A^{\tau}= A^{\sigma} A^{\sigma'}=
\left[ {\begin{array}{*{20}{c}}
		
		0&-1&0&1\\
		-1&0&1&1\\
		0&1&0&1 \\
		1&1&1&0
\end{array}} \right]
$$
In this case, graph $ G'$ as Fig.....
\begin{figure}[H]
	\centering
	\begin{tikzpicture}[node distance=\GraphNodeDistance, >=stealth',
		minimum size=\GraphNodeSize, inner sep=\GraphInnerSep, line width=\GraphLineWidth]
		\tikzstyle{init} = [pin edge={to-, thin, white}]
		\tikzstyle{place}=[circle, draw ,thick,fill=black]
		\tikzstyle{label}=[circle , minimum size=1 pt,thick]
		
		\node [place] (v1) at (0,1) {};
		\node [label] at (0,1.3) {$u_0$};
		
		\node [place] (v2) at (2,0) {};
		\node [label] at (2,0.3) {$v_0$};

		\node [place] (v3) at (2,-2) {};
		\node [label] at (2,-2.3) {$w_0$};

		\node [place] (v4) at (0,-2) {};
		\node [label] at (0,-2.3)
		{$z_0$};
			
		\draw[-] (v2) -- (v3);
		\draw[-] (v3) -- (v4);
		\draw[-] (v1) -- (v4);
		\draw[-] (v2) -- (v4);

			\node [place] (w1) at (4,0) {};
		\node [label] at (4,0.3) {$u_1$};
		
		\node [place] (w2) at (6,1) {};
		\node [label] at (6,1.3) {$v_1$};

		\node [place] (w3) at (6,-2) {};
		\node [label] at (6,-2.3) {$w_1$};

		\node [place] (w4) at (4,-2) {};
		\node [label] at (4,-2.3)
		{$z_1$};
		
		\draw[-] (w2) -- (w3);
		\draw[-] (w3) -- (w4);
		\draw[-] (w1) -- (w4);
		\draw[-] (w2) -- (w4);
		
		
		\draw[-] (w2) -- (v1);
		\draw[-] (w1) -- (v2);

		\%node [label] (uk) at (2,-0.5) [right] {};
		
	\end{tikzpicture}
	\caption{Graph $ G .$ }
\end{figure}
In this sense, we have 
$$
A^{\phi}=
\left[ {\begin{array}{*{20}{c}}
		
		0&0&0&-1&0&0&1&0\\
		0&0&-1&0&0&0&0&1\\
		0&-1&0&0&1&0&-1&0 \\
		-1&0&0&0&0&1&0&-1 \\
		0&0&1&0&0&0&1&0 \\
		0&0&0&1&0&0&0&1 \\
		1&0&-1&0&1&0&0&0 \\
		0&1&0&-1&0&1&0&0 \\
		
\end{array}} \right]
$$
Note that the eigenvalues of $ A^{\phi} $ are $ \lbrace \frac{\sqrt{-17}-1}{2}, -2, -1, -1, 0, 1, 2, \frac{\sqrt{17}-1}{2}  \rbrace .$ Therefore, 
$$ \rho(G'^{\phi})= \frac{\sqrt{-17}-1}{2}< 2\sqrt{3-1}.$$ 
We can see $ \phi $ is a good signing for $ G'.$
			

\begin{thebibliography}{}
\bibitem{amit} A. Amit, N. Linial, and J. Matoušek, Random lifts of graphs, independence and chromatic number, Random Structures Algorithms 20(1) (2002) 1-22.	
	
\bibitem{bilu} Y. Bilu and N. Linial, Lifts, discrepancy, and nearly optimal spectral
	gap, Combinatorica, 26(5) (2006) 495-519.
		
\bibitem{burn} S. P. Burnwal, K. Sinha, and M. Vidyasagar, New and explicit constructions of unbalanced Ramanujan bipartite graphs, The Ramanujan Journal (2021) 1-27.
		
\bibitem{chiu}P. Chiu, Cubic Ramanujan graphs, Combinatorica 12 (1992), 275-285.
	
\bibitem{frie} J. Friedman, Relative expanders or weakly relatively Ramanujan graphs, Duke
Math. J. 118(1) (2003) 19-35. 

\bibitem{godsil} C. Godsil, G. Royle, Algebraic Graph Theory, Springer, New York, 2001.
	
\bibitem{greg}  D.A. Gregory, Spectra of signed adjacency matrices, Queen’sR. MC Discrete Mathematics Seminar, (2012).		
	
\bibitem{hyun} J. Y. Hyun, L. Jungyun, and L. Yoonjin, Ramanujan graphs and expander families constructed from p-ary bent functions, Designs, Codes and Cryptography 88(2) (2020) 453-470.		

\bibitem{jord} B. W. Jordan and R. Livne´, Ramanujan local systems on graphs, Topology 36
(1997), 1007-1024.
	
\bibitem{lela} M. Lelarge, Counting matchings in irregular bipartite graphs and random lifts, Proceedings of the Twenty-Eighth Annual ACM-SIAM Symposium on Discrete Algorithms, Society for Industrial and Applied Mathematics, (2017) 2230-2237.
	
	
\bibitem{lube} E. Lubetzky, B. Sudakov, and V. Vu, Spectra of lifted Ramanujan graphs, Advances in Mathematics, 227(4) (2011) 1612-1645. 	

\bibitem{lubo} A. Lubotzky, R. Phillips, and P. Sarnak, Ramanujan graphs, Combinatorica,
8 (1988) 261-277.
	
\bibitem{marg} G .A. Margulis, Explicit group-theoretic constructions of combinatorial
schemes and their applications in the construction of expanders and concentrators, Problemy Peredachi Informatsii, 24 (1988) 51-60.
	
\bibitem{marc} A. Marcus, A. S. Daniel, and S. Nikhil, Interlacing families I: Bipartite Ramanujan graphs of all degrees, 2013 IEEE 54th Annual Symposium on Foundations of computer science. IEEE, (2013) 529-537.		

\bibitem{morg} M. Morgenstern, Existence and explicit constructions of $ q+1 $ regular Ramanujan graphs for every prime power q, J. Combin. Theory Ser. B, 62 (1994) 
44-62.
	
\bibitem{pize} A. K. Pizer, Ramanujan graphs and Hecke operators, Bull. Amer. Math. Soc,
23 (1990) 127-137.


\end{thebibliography}
\end{document}